\newtheorem{theorem}{Theorem} 
\newtheorem{proposition}{Proposition}
\theoremstyle{definition}
\newtheorem{remark}{Remark}
\title{Frequency-Constrained Edge Coloring of Hypergraphs}
\author{Saeed Shaebani}
\address{S. Shaebani, 
	School of Mathematics and Computer Science,
	Damghan University, Damghan, Iran.}
\email{shaebani@du.ac.ir}
\author{Meysam Alishahi}
\address{M. Alishahi, 
	Faculty of Mathematical Sciences,
	Shahrood University of Technology, Shahrood, Iran.}
\email{meysam\_alishahi@shahroodut.ac.ir}
\begin{document}
\begin{abstract}
\noindent Suppose that a hypergraph ${\mathcal H}$ and an arbitrary nonempty (finite or infinite) set of available colors are given.
Each color $x$ is associated with a frequency $\tau (x)$, where the set of all such frequencies is bounded.
We define a new parameter called the 
{\it $\tau$-matching chromatic number}, denoted by $\chi_M(\tau, {\mathcal H})$,
as the least possible number of colors required to color the edges of
${\mathcal H}$ in such a way that the size of each nonempty monochromatic matching does not exceed the frequency of the corresponding color associated to its edges.
The well-known and extensively well-studied chromatic number of general Kneser hypergraph $\chi \left( {\rm KG}^r({\mathcal H}) \right)$ is a special case of $\chi_M(\tau, {\mathcal H})$ when all color frequencies are the fixed constant $r-1$.

In this paper, we establish sharp lower bounds for the parameter
$\chi_M(\tau , {\mathcal H})$, utilizing the concepts of the alternation number and the equitable colorability defect.

\noindent {\bf Keywords:}\ { Hypergraph, Matching, Chromatic Number, General Kneser Hypergraph, Alternation Number, Equitable Colorability Defect.}\\
{\bf Mathematics Subject Classification: 05C15, 05C62, 05C65, 05C70.}
\end{abstract}

\maketitle
\section{{\bf Matching Coloring of Hypergraphs}}

A {\it hypergraph} ${\mathcal H}$ is an ordered pair ${\mathcal H}=(V,E)$, where $V$ is a finite set (called the {\it vertex set} of ${\mathcal H}$) and $E$ is a set of some nonempty subsets of $V$ (called the {\it edge set} of ${\mathcal H}$). A hypergraph ${\mathcal H}$ is said to be {\it $r$-uniform} if each of its edges is of size $r$, that is, whenever $|e| = r$ for each edge $e$ of ${\mathcal H}$.

By a {\it proper vertex coloring} of a (not necessarily uniform) hypergraph ${\mathcal H}$, we mean a function
$c:V({\mathcal H}) \rightarrow C$ such that for each $S\in E({\mathcal H})$ with $|S|\geq 2$ we have $|\{c(v) : v\in S\}| \geq 2$. Such a set $C$ is called the set of {\it colors}. By a {\it singleton edge} of ${\mathcal H}$, we mean an edge $e$ with $|e|=1$. Whenever ${\mathcal H}$ has no singleton edges, we define the {\it chromatic number} of ${\mathcal H}$, denoted by $\chi ({\mathcal H})$, as the minimum cardinality of a set $C$ such that a proper vertex coloring $f:V({\mathcal H})\rightarrow C$ exists. If ${\mathcal H}$ has some singleton edges, then we define $\chi ({\mathcal H}) :=+\infty$.

As a definition, any subset of $E({\mathcal{H}})$ which consists of some pairwise disjoint edges of ${\mathcal{H}}$ is called a {\it matching}; and an {\it $l$-matching} is a matching of size
$l$, that is, a set consisting of $l$ pairwise disjoint edges of ${\mathcal{H}}$.

Let ${\mathcal H}$ be a hypergraph with $n$ vertices, and $r$ be a positive integer. Also, let $C$ be an arbitrary nonempty (finite or infinite) set, and regard its elements as our available colors.

\noindent Any arbitrary mapping $\tau: C\longrightarrow \{0,1,\ldots,r-1\}$ is called an {\it $r$-color-frequency mapping}. So, by an $r$-color-frequency mapping, each color in $C$ receives a nonnegative integer less that $r$ as its {\it frequency}. Our aim is coloring the edges of ${\mathcal H}$ by some colors in $C$ in such a way that the size of each nonempty monochromatic matching does not exceed the frequency of the color associated to its edges. Naturally, one is looking for the least possible number of colors needed for such a coloring.

\noindent Formally, for a subset $A$ of $C$, an {\it $(A,\tau)$-matching coloring} of ${\mathcal H}$ is
a coloring of edges of ${\mathcal H}$ by some colors in $A$ in such a way that the size of each nonempty monochromatic matching is bounded above by the frequency of the corresponding color associated to its edges.
The hypergraph ${\mathcal H}$ is called {\it $\tau$-admissible} if ${\mathcal H}$ admits an $(A,\tau)$-matching coloring for some finite subset $A$ of $C$; and in this case, we define the {\it $\tau$-matching chromatic number} of ${\mathcal H}$, denoted by $\chi_M(\tau, {\mathcal H})$, equal to the minimum cardinality of such a finite subset $A$ of $C$. As usual, for the case where ${\mathcal H}$ is not $\tau$-admissible, we set $\chi_M(\tau, {\mathcal H}):=+\infty$.

The {\it general Kneser hypergraph} ${\rm KG}^r({\mathcal H})$ is an $r$-uniform hypergraph which has $E({\mathcal H})$ as its vertex set and  whose edges are formed by $r$ pairwise disjoint edges of ${\mathcal H}$, that is,
$$E\left({\rm KG}^r({\mathcal H})\right) :=\left\{\{e_1,\ldots,e_r\}:\; e_i\cap e_j=\varnothing \mbox{ for all } i\neq j\right\}.$$
It is worth pointing out that the well-known and extensively well-studied chromatic number of general Kneser hypergraph $\chi \left( {\rm KG}^r({\mathcal H}) \right)$ is a special case of $\chi_M(\tau, {\mathcal H})$ where all color frequencies are the fixed constant $r-1$.

In the rest of this paper, we aim to establish sharp lower bounds for the parameter
$\chi_M(\tau , {\mathcal H})$, utilizing the concepts of the alternation number and the equitable colorability defect.

\maketitle
\section{{\bf Alternation Number of Hypergraphs}}

This section is devoted to provide a sharp lower bound for $\chi_M(\tau, {\mathcal H})$ in terms of alternation number.

For any nonnegative integer $n$, let the symbol $[n]$ denote the set $\{1, 2, \dots , n\}$.
Also, for a positive integer $r\geq 2$,
let $\mathbb{Z}_r=\{\omega,\omega^2,\ldots,\omega^r\}$ be a multiplicative cyclic group of order $r$  with a generator $\omega$ in such a way that
$0 \notin \{\omega,\omega^2,\ldots,\omega^r\}$.

\noindent For a vector $X=(x_1,x_2,\ldots,x_n)\in(\mathbb{Z}_r\cup\{0\})^n\setminus\{(0,0, \dots , 0)\}$, a subsequence of nonzero terms of $X$ is called {\it alternating} if each two consecutive terms of this subsequence are different.  The length of the longest alternating subsequence of $X$ is denoted by ${\rm alt}(X)$. For example,
if we set $r=3$, $n=14$, and $X=(\omega^2 , 0 , 0 , \omega^2,\omega , 0 ,\omega^3 , 0 , \omega^2 , \omega^3 , 0 , \omega^3 , \omega , 0)$, then ${\rm alt}(X)=6$.
Also, we define ${\rm alt}(0,0,\dots,0)$ to be zero.
For an $X=(x_1,x_2,\ldots,x_n)\in(\mathbb{Z}_r\cup\{0\})^n$ and $\epsilon\in \mathbb{Z}_r$, we define $X^\epsilon\subseteq [n]$ to be the set of all indices $j$ such that $x_j=\epsilon$, that is,
$X^\epsilon :=\{j:\; x_j=\epsilon\}$.
By abuse of language, we can write $X=\displaystyle{\left(X^\epsilon\right)}_{\epsilon\in\mathbb{Z}_r}$.

Let ${\mathcal H}$ be a hypergraph with $n$ vertices and $\sigma:[n]\longrightarrow V({\mathcal H})$ be an injective mapping.
We define ${\rm alt}_r({\mathcal H},\sigma)$ to be the maximum possible value of
${\rm alt} (X)$ for  $X=\displaystyle{\left(X^\epsilon\right)}_{\epsilon\in\mathbb{Z}_r}\in (\mathbb{Z}_r\cup\{0\})^n$ such that none of $\sigma(X^\epsilon)$ contains any edge of ${\mathcal H}$.
In other words,
$${\rm alt}_r({\mathcal H},\sigma) :=\max\left\{{\rm alt}(X): X\in(\mathbb{Z}_r\cup\{0\})^n \mbox{ and for each } \epsilon\in \mathbb{Z}_r, \mbox{ we have } E({\mathcal H}[\sigma\left(X^\epsilon\right)])=\varnothing  \right\}.$$
The {\it alternation number of ${\mathcal H}$}, denoted by ${\rm alt}_r({\mathcal H})$, is the minimum possible value for ${\rm alt}_r({\mathcal H},\sigma)$ where the minimum is taken over all
injective mappings $\sigma:[n]\longrightarrow V({\mathcal H})$.

Alishahi and Hajiabolhassan ~\cite{2013arXiv1302.5394A} presented a lower bound for the chromatic number of general Kneser hypergraphs ${\rm KG}^r({\mathcal H})$ in terms of $n$ and ${\rm alt}_r({\mathcal H})$.
\begin{theorem}{\rm (\cite{2013arXiv1302.5394A})}\label{alihaji}
	For an integer $r\geq 2$ and a hypergraph ${\mathcal H}$, we have
	$$\chi\left({\rm KG}^r({\mathcal H})\right)\geq \left\lceil{ |V({\mathcal H})|-{\rm alt}_r({\mathcal H})\over r-1}\right\rceil.$$
\end{theorem}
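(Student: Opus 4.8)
The plan is to derive this from the $\mathbb{Z}_p$-Tucker Lemma, which needs a prime modulus, so one first reduces to the case that $r$ itself is a prime $p$: when $r$ is composite this is achieved by the reduction of Alon, Frankl and Lov\'asz, at no loss in the final bound. So assume $r=p$ is prime and set $n=|V(\mathcal{H})|$, $C=\chi(\KG^r(\mathcal{H}))$, $t=\alt_r(\mathcal{H})$. Recall the form of the lemma to be used: if $p$ is prime, $\alpha\ge0$, $m\ge1$, and
\[
\lambda=(\lambda_1,\lambda_2)\colon(\mathbb{Z}_p\cup\{0\})^n\setminus\{\zero\}\longrightarrow\bigl(\{1,\dots,\alpha\}\cup\{\alpha+1,\dots,\alpha+m\}\bigr)\times\mathbb{Z}_p
\]
is $\mathbb{Z}_p$-equivariant (with $\omega$ acting coordinatewise on the source and on the $\mathbb{Z}_p$-factor of the target), has the property that $X\preceq Y$ and $\lambda_1(X)=\lambda_1(Y)\le\alpha$ force $\lambda_2(X)=\lambda_2(Y)$, and admits no chain $X_1\preceq\cdots\preceq X_p$ with $\lambda_1(X_1)=\cdots=\lambda_1(X_p)>\alpha$ and $\{\lambda_2(X_1),\dots,\lambda_2(X_p)\}=\mathbb{Z}_p$, then $\alpha+m(p-1)\ge n$. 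Here $X\preceq Y$ means $X^\epsilon\subseteq Y^\epsilon$ for all $\epsilon\in\mathbb{Z}_p$, and the indices $\{1,\dots,\alpha\}$ form the \emph{first block}, $\{\alpha+1,\dots,\alpha+m\}$ the \emph{second block}.

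One argues by contradiction: if the asserted inequality fails then, $C$ being an integer, $C(r-1)+t<n$. Fix a proper colouring $c\colon E(\mathcal{H})\to[C]$ of $\KG^p(\mathcal{H})$ and an injection $\sigma\colon[n]\to V(\mathcal{H})$ with $\alt_p(\mathcal{H},\sigma)=t$; then whenever $\alt(X)\ge t+1$ some set $\sigma(X^\epsilon)$ contains a hyperedge of $\mathcal{H}$. Now define $\lambda$ on $(\mathbb{Z}_p\cup\{0\})^n\setminus\{\zero\}$ as follows. If $\alt(X)\le t$, put $\lambda(X)=(\alt(X),x_i)$ where $i$ is the least index with $x_i\ne0$; this lies in $\{1,\dots,t\}\times\mathbb{Z}_p$. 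If $\alt(X)\ge t+1$, let $\ell=\ell(X)$ be the least integer such that $\sigma(X^\epsilon\cap[\ell])$ contains a hyperedge of $\mathcal{H}$ for some $\epsilon\in\mathbb{Z}_p$; minimality of $\ell$ forces $x_\ell\ne0$, and since such a hyperedge must use a vertex unavailable at step $\ell-1$, namely $\sigma(\ell)$, it also forces the witnessing sign to equal $x_\ell$, so this sign is unambiguous. Among the hyperedges $e\subseteq\sigma(X^{x_\ell}\cap[\ell])$ let $e(X)$ be one of least colour, ties broken by a fixed rule independent of the group action, and put $\lambda(X)=(t+c(e(X)),x_\ell)\in\{t+1,\dots,t+C\}\times\mathbb{Z}_p$. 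Thus $\lambda$ has the required shape with $\alpha=t$ and $m=C$.

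It then remains to verify the three hypotheses of the lemma. Equivariance is immediate, since $\alt$, $\ell$, $c$ and the tie-break are unchanged under $X\mapsto\omega X$ while the leading symbol $x_i$ and the pivot symbol $x_\ell$ are multiplied by $\omega$. For the first-block condition, suppose $X\preceq Y$ with $\alt(X)=\alt(Y)\le t$: a longest alternating subsequence of $X$ may be taken to begin at its leading coordinate, of symbol $\epsilon$, and then every nonzero coordinate of $Y$ preceding that one must also carry $\epsilon$ (otherwise prepending it would yield a longer alternating subsequence of $Y$), so $Y$ has leading symbol $\epsilon$ as well, i.e.\ $\lambda_2(X)=\lambda_2(Y)$. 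Finally, a chain $X_1\preceq\cdots\preceq X_p$ with a common second-block index $t+j$ and $\{\lambda_2(X_s)\}_{s=1}^{p}=\mathbb{Z}_p$ would give hyperedges $e(X_s)\subseteq\sigma\bigl(X_s^{\nu_s}\bigr)\subseteq\sigma\bigl(X_p^{\nu_s}\bigr)$ with $\nu_s:=\lambda_2(X_s)$ running over all of $\mathbb{Z}_p$; as the $\nu_s$ are pairwise distinct, the sets $X_p^{\nu_1},\dots,X_p^{\nu_p}$ are pairwise disjoint, so $\{e(X_1),\dots,e(X_p)\}$ is a hyperedge of $\KG^p(\mathcal{H})$ all of whose vertices receive colour $j$, which is impossible since $c$ is proper. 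Hence the lemma applies and yields $t+C(p-1)\ge n$, contradicting $C(r-1)+t<n$.

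The step I expect to require real care is the second branch in the construction of $\lambda$: one must be sure that at the minimal witnessing prefix the last active coordinate is \emph{forced} to equal the symbol recorded as $\lambda_2$ (this is precisely what makes $\lambda$ well defined and equivariant there), and that the colour can be read off in a way invariant under the group action. The verification of the first-block condition uses the short alternation argument above, and everything else is bookkeeping. The one genuinely separate point is the reduction of composite $r$ to the prime case, as the $\mathbb{Z}_p$-Tucker Lemma has no clean analogue for non-prime moduli.
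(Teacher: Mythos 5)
Your argument for the case that $r$ is a prime $p$ is correct and is essentially the technique this paper itself uses: the paper does not reprove Theorem~\ref{alihaji} directly (it is quoted from \cite{2013arXiv1302.5394A}), but the proof of Proposition~\ref{mainthm} in Section~\ref{ZPTucker}, specialized to $\tau\equiv p-1$, is exactly your construction — first block indexed by $\alt(X)$ with the leading nonzero symbol, second block indexed by the colour of a hyperedge inside some $\sigma(X^\epsilon)$ with the pivot symbol, and the chain condition violated by a rainbow-signed chain producing $p$ pairwise disjoint monochromatic hyperedges. Your well-definedness check at the minimal prefix $\ell(X)$ (the witnessing symbol is forced to be $x_\ell$) and the first-block alternation argument are both sound.

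The genuine gap is the composite case, which you dispose of in one sentence by citing ``the reduction of Alon, Frankl and Lov\'asz, at no loss in the final bound.'' That reduction is built for the colorability defect: it propagates the bound $\chi(\KG^r(\mathcal{H}))\geq\lceil \cd_r(\mathcal{H})/(r-1)\rceil$ from primes to composites because $\cd_r$ interacts in a controlled way with the auxiliary hypergraphs appearing in the induction on the prime factorization of $r$. The bound you are proving is strictly stronger: since any $X$ witnessing $\alt_r(\mathcal{H},\sigma)$ yields $r$ disjoint hyperedge-free sets $\sigma(X^\epsilon)$ covering at least $\alt(X)$ vertices, one has $|V(\mathcal{H})|-\alt_r(\mathcal{H})\geq \cd_r(\mathcal{H})$, with strict inequality possible. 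So the AFL machinery, applied off the shelf, proves only the weaker defect version for composite $r$; to get the alternation version you must establish a compatibility statement relating $\alt_{r_1r_2}(\mathcal{H})$ to the alternation numbers (over $\mathbb{Z}_{r_1}$ and $\mathbb{Z}_{r_2}$) of the hypergraphs arising in the reduction, and this is not automatic — collapsing the $r_1r_2$ symbols into $r_1$ groups can both destroy alternation and create hyperedges in the grouped classes. This compatibility lemma is precisely the technical content of the general-$r$ case in \cite{2013arXiv1302.5394A}, and your proposal does not supply it; as written, the proof is complete only for prime $r$.
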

The notion of alternation number and in particular, the Theorem~\ref{alihaji} were applied on various families of hypergraphs to compute their chromatic numbers or to investigate their coloring properties;
see~\cite{arsmc/SaniAT18,journals/combinatorics/Alishahi17,2013arXiv1302.5394A,2014arXiv1401.0138A,2014arXiv1403.4404A,jgt.22215,2014arXiv1407.8035A,Alishahi_Hossein_2020}.

In the rest of this section, a development of Theorem \ref{alihaji} is established. In this regard, the following theorem provides a lower bound for the $\tau$-matching chromatic number of hypergraphs.

\begin{theorem}\label{mainthmSaeed}
	Let $r\geq 2$ be an integer, $C$ be a nonempty finite or infinite set, and
	$\tau: C \longrightarrow \{0,1,\ldots,r-1\}$ be an $r$-color-frequency  mapping.
	Then, for any $\tau$-admissible hypergraph ${\mathcal H}$ there exists a finite subset
	$B$ of $C$ for which
	$$\sum_{b\in B}\tau(b) \geq  |V({\mathcal H})|-{\rm alt}_r({\mathcal H}) .$$
	Besides, we have
	$$\chi_M(\tau,{\mathcal H})\geq \min \left\{|A|:\;   A \subseteq C, \mbox{ and } A \mbox{ is finite,}  \mbox{ and also } \sum_{a\in A}\tau(a) \geq  |V({\mathcal H})|-{\rm alt}_r({\mathcal H})\right\}.$$
\end{theorem}

\begin{remark}
	The Theorem~\ref{mainthmSaeed} can be viewed as a generalization of Theorem~\ref{alihaji}. To illustrate this, let us consider the set of available colors $C$ to be the set of natural numbers
	$\mathbb{N}$, and define a constant $r$-color-frequency mapping $\tau: \mathbb{N} \longrightarrow \{0,1,\ldots,r-1\}$ by setting $\tau(a) = r-1$ for every $a \in \mathbb{N}$. Under this construction, one obtains:
	$$\chi_M(\tau, {\mathcal H})=\chi\left({\rm KG}^r({\mathcal H})\right)$$
	and also,
	$$\min\left\{|A|:\;   A \subseteq C, \mbox{ and } A \mbox{ is finite,}  \mbox{ and } \sum_{a\in A}\tau(a) \geq  |V({\mathcal H})|-{\rm alt}_r({\mathcal H})\right\}=\left\lceil{ |V({\mathcal H})|-{\rm alt}_r({\mathcal H})\over r-1}\right\rceil.$$
	Therefore, the assertion of Theorem \ref{mainthmSaeed} implies 
	$\chi\left({\rm KG}^r({\mathcal H})\right)\geq \left\lceil{ |V({\mathcal H})|-{\rm alt}_r({\mathcal H})\over r-1}\right\rceil.$
\end{remark}

\vspace{0.4cm}

Having concluded the above remark, we now return to the main argument and present the proof of Theorem \ref{mainthmSaeed}, as follows.

\noindent{\bf Proof of Theorem~\ref{mainthmSaeed}.}
Let us regard an arbitrary finite subset $A$ of $C$ for which ${\mathcal H}$ admits an $(A,\tau)$-matching coloring $c: E({\mathcal H})\longrightarrow A$.
Now, for each $a$ in $A$ with $\tau(a) < r-1$, we add $r-1-\tau(a)$ additional vertices
$x^{(a)}_{1},x^{(a)}_{2},\ldots ,x^{(a)}_{r-1-\tau(a)}$ to ${\mathcal H}$
together with adding all additional singleton edges
$\left\{x^{(a)}_{1}\right\},\left\{x^{(a)}_{2}\right\},\ldots ,\left\{x^{(a)}_{r-1-\tau(a)}\right\}$ to $E({\mathcal H})$
in order to obtain a new hypergraph $\widehat{{\mathcal H}}$.
Also, we extend the edge coloring $c: E({\mathcal H})\longrightarrow A$
to an edge coloring of $\widehat{{\mathcal H}}$, say $\widehat{c}: E\left(\widehat{{\mathcal H}}\right)\longrightarrow A$, in such a way that for each $a$ in $A$ with $\tau(a) < r-1$, all additional singleton edges
$\left\{x^{(a)}_{1}\right\},\left\{x^{(a)}_{2}\right\},\ldots ,\left\{x^{(a)}_{r-1-\tau(a)}\right\}$ are colored by $a$.
On one hand, the function $c: E({\mathcal H})\longrightarrow A$ is an $(A,\tau)$-matching coloring
of ${\mathcal H}$. Hence, for each $a$ in $A$, the size of every matching in ${\mathcal H}$
whose edges are colored by $a$ is less than or equal to $\tau (a)$.
On the other hand,
there are exactly $r-1-\tau(a)$ edges in $E\left(\widehat{{\mathcal H}}\right) \setminus E({\mathcal H})$
that are colored by $a$ under $\widehat{c}: E\left(\widehat{{\mathcal H}}\right)\longrightarrow A$. We conclude that for each $a$ in $A$, the size of every matching in $\widehat{{\mathcal H}}$ whose edges are colored by $a$ is at most $r-1$. Therefore, the function
$\widehat{c}: E\left(\widehat{{\mathcal H}}\right)\longrightarrow A$
is a proper vertex coloring of ${\rm KG}^r\left(\widehat{{\mathcal H}}\right)$. So,
we have
$$|A| \geq \chi\left({\rm KG}^r\left(\widehat{{\mathcal H}}\right)\right)\geq { \left|V\left(\widehat{{\mathcal H}}\right)\right|-{\rm alt}_r\left(\widehat{{\mathcal H}}\right)\over r-1};$$
and therefore, 
$$(r-1)|A| \geq \left|V\left(\widehat{{\mathcal H}}\right)\right|-{\rm alt}_r\left(\widehat{{\mathcal H}}\right).$$
Since 
$(r-1)|A| = \left( \displaystyle\sum_{a\in A}\tau(a) \right) + \left|V\left(\widehat{{\mathcal H}}\right)\right|-|V({\mathcal H})|$ and ${\rm alt}_r\left(\widehat{{\mathcal H}}\right)={\rm alt}_r({\mathcal H})$,
we have
\begin{center}
	$\left( \displaystyle\sum_{a\in A}\tau(a) \right) + \left|V\left(\widehat{{\mathcal H}}\right)\right|-|V({\mathcal H})| \geq \left|V\left(\widehat{{\mathcal H}}\right)\right|-{\rm alt}_r({\mathcal H});$
\end{center}
and therefore,
$$\displaystyle\sum_{a\in A}\tau(a) \geq  |V({\mathcal H})|-{\rm alt}_r({\mathcal H}) .$$
We conclude that for any finite subset $A$ of $C$ for which ${\mathcal{H}}$ admits an $(A,\tau)$-matching coloring, the relation
$\displaystyle\sum_{a\in A}\tau(a) \geq  |V({\mathcal H})|-{\rm alt}_r({\mathcal H}) $ is also satisfied. Accordingly,
$$\chi_M(\tau,{\mathcal H})\geq \min \left\{|A|:\;   A \subseteq C, \mbox{ and } A \mbox{ is finite,}  \mbox{ and also } \sum_{a\in A}\tau(a) \geq  |V({\mathcal H})|-{\rm alt}_r({\mathcal H})\right\} ;$$
as desired.

\hfill$\square$

In the proof of Theorem~\ref{mainthmSaeed}, the procedure is showing that for every finite subset $A$ of $C$
that an $(A,\tau)$-matching coloring  exists, we have
$ \displaystyle\sum_{a\in A}\tau(a) \geq  |V({\mathcal H})|-{\rm alt}_r({\mathcal H}).$
If ${\mathcal H}$ has at least one edge, then every $(A,\tau)$-matching coloring of ${\mathcal H}$ satisfies $ A\neq\varnothing$. So, one can also rewrite the assertion of Theorem~\ref{mainthmSaeed}
as follows:
$$\chi_M(\tau,{\mathcal H})\geq \min \left\{|A|:\;   \varnothing \neq A \subseteq C, \mbox{ and } A \mbox{ is finite,}  \mbox{ and also } \sum_{a\in A}\tau(a) \geq  |V({\mathcal H})|-{\rm alt}_r({\mathcal H})\right\} .$$



\maketitle
\section{{\bf Equitable Colorability Defect of Hypergraphs}}

\def\cprime{$'$} \def\cprime{$'$}
For two positive integers $n$ and $k$, the symbol ${[n]\choose k}$  denotes the set of all $k$-subsets of $[n]$.
The {\it complete $k$-uniform  hypergraph}  $K_n^k$ is a hypergraph with vertex set $[n]$ and the edge set ${[n]\choose k}$.

\noindent In 1955, Kneser \cite{MR0068536} proved that $\chi\left({\rm KG}^2 \left(  K_n^k \right) \right) \leq n-2k+2$ for $n\geq 2k-1$, and he conjectured that 
$$\begin{array}{lcccr}
	\chi\left({\rm KG}^2 \left(  K_n^k \right) \right) = n-2k+2  &    &  {\rm for }  &   &   n\geq 2k-1 .
\end{array}$$
Also, Erd\H{o}s
\cite{MR0465878} in 1976 conjectured that for $r\geq 2$ we have
$$\begin{array}{lrr}
	\chi\left({\rm KG}^r \left(  K_n^k \right) \right) = \left\lceil \frac{n-r(k-1)}{r-1} \right\rceil &  {\rm if }  &  n \geq r(k-1) + 1 .
\end{array}$$
The former conjecture was shown to be correct by Lov{\'a}sz \cite{MR514625} in 1978, and the latter conjecture was also settled in 1986
by Alon, Frankl, and Lov{\'a}sz \cite{MR857448}.
As a generalization of these results, Dol{\cprime}nikov and K{\v{r}}{\'{\i}}{\v{z}} \cite{MR953021,MR1081939,MR1665335}
proved that for all hypergraphs ${\mathcal H}$, the relation
$$\chi\left({\rm KG}^r   ({\mathcal H})  \right) \geq \left\lceil \frac{ {\rm cd}^r({\mathcal H}) }{r-1} \right\rceil $$
holds, where the symbol ${\rm cd}^r({\mathcal H})$, called the {\it $r$-th colorability defect} of ${\mathcal H}$, is the least possible number of vertices that one must remove from
${\mathcal H}$ in such a way that the vertices of the remaining subhypergraph could be properly colored by at most $r$ colors. This lower bound for
$\chi\left({\rm KG}^r   ({\mathcal H})  \right) $ is sharp since for
$n \geq r(k-1) + 1$ we have
${\rm cd}^r \left(  K_n^k \right) = n-r(k-1)$; and therefore,
$$\begin{array}{lrr}
	\chi\left({\rm KG}^r \left(  K_n^k \right) \right) = \left\lceil \frac{{\rm cd}^r \left(  K_n^k \right)}{r-1} \right\rceil &  {\rm if }  &  n \geq r(k-1) + 1 .
\end{array}$$
Later, Abyazi Sani and Alishahi \cite{ABYAZISANI2018229}, and Azarpendar and Jafari \cite{AZARPENDAR2023103664},
refined Dol{\cprime}nikov and K{\v{r}}{\'{\i}}{\v{z}}'s Theorem by showing that
$$\chi\left({\rm KG}^r ({\mathcal H}) \right) \geq \left\lceil \frac{ {\rm ecd}^r({\mathcal H}) }{r-1} \right\rceil ,$$
where ${\rm ecd}^r({\mathcal H})$, referred to as the {\it $r$-th equitable colorability defect} of ${\mathcal H}$, denotes the minimum number of vertices that must be
removed from ${\mathcal H}$ in order to obtain a remaining subhypergraph that admits a proper vertex coloring using $r$ colors, such that the sizes of the color classes differ by at most one.
This inequality is also sharp because
$$\begin{array}{lrr}
	\chi\left({\rm KG}^r \left(  K_n^k \right) \right) = \left\lceil \frac{{\rm ecd}^r \left(  K_n^k \right)}{r-1} \right\rceil &  {\rm whenever }  &  n \geq r(k-1) + 1 .
\end{array}$$

One observes that $\left\lceil \frac{ {\rm ecd}^r({\mathcal H}) }{r-1} \right\rceil  \geq \left\lceil \frac{ {\rm cd}^r({\mathcal H}) }{r-1} \right\rceil $
holds. Also, it was shown in \cite{2013arXiv1302.5394A} that
$$\left\lceil{ |V({\mathcal H})|-{\rm alt}_r({\mathcal H})\over r-1}\right\rceil \geq \left\lceil \frac{ {\rm cd}^r({\mathcal H}) }{r-1} \right\rceil .$$
But it was shown in \cite{ABYAZISANI2018229,arsmc/SaniAT18} that two lower bounds
$\left\lceil{ |V({\mathcal H})|-{\rm alt}_r({\mathcal H})\over r-1}\right\rceil$ and
$\left\lceil \frac{ {\rm ecd}^r({\mathcal H}) }{r-1} \right\rceil$ are not comparable to each other in the sense that in
$$\left\lceil{ |V({\mathcal H})|-{\rm alt}_r({\mathcal H})\over r-1}\right\rceil \bigcirc \left\lceil \frac{ {\rm ecd}^r({\mathcal H}) }{r-1} \right\rceil ,$$
each of the three main comparing symbols $<$ , $=$ , and $>$ could be replaced instead of $\bigcirc$, depending on the structure of ${\mathcal{H}}$.

In this section, we are concerned with providing a tight lower bound for $\chi_M(\tau, {\mathcal H})$ in terms of equitable colorability defect.
In this regard, we state and prove the following general Theorem \ref{mainthmSaeed2}.
As a specific instance of the assertion of Theorem \ref{mainthmSaeed2}, the celebrated inequality
$\chi\left({\rm KG}^r ({\mathcal H}) \right) \geq \left\lceil \frac{ {\rm ecd}^r({\mathcal H}) }{r-1} \right\rceil $
follows by considering the fixed constant $r$-color-frequency mapping
$\tau: \mathbb{N} \longrightarrow \{0,1,\ldots,r-1\}$ with $\tau (a) = r-1$ for all elements $a$ in $ \mathbb{N} $.

\begin{theorem} \label{mainthmSaeed2}
	Let $C$ be a nonempty finite or infinite set, and $r$ be an integer with $r\geq 2$.
	Also, let the function $\tau: C \longrightarrow \{0,1,\ldots,r-1\}$ be an $r$-color-frequency mapping, and ${\mathcal H}$ be a $\tau$-admissible hypergraph. Then
	there exists a finite subset
	$B$ of $C$ for which
	$$\sum_{b\in B}\tau(b) \geq  {\rm ecd}^r({\mathcal H}) .$$
	Furthermore,
	$$\chi_M(\tau,{\mathcal H})\geq \min\left\{|A|:\   A \subseteq C, \mbox{ and } A \mbox{ is finite,}  \mbox{ and also } \sum_{a\in A}\tau(a) \geq  {\rm ecd}^r({\mathcal H})\right\}.$$
\end{theorem}

\begin{proof}{
		The main idea of the proof is to show that if ${\mathcal H}$ admits an $(A,\tau)$-matching coloring for some finite subset $A$ of $C$, then we would have $\displaystyle\sum_{a\in A}\tau(a) \geq  {\rm ecd}^r({\mathcal H})$.
		
		Let us regard an arbitrary finite subset $A$ of $C$ for which ${\mathcal H}$ admits an $(A,\tau)$-matching coloring $c: E({\mathcal H})\longrightarrow A$. We construct a new hypergraph $\widetilde{{\mathcal H}}$ in such a way that for each element $a$ of $A$ with $\tau(a) < r-1$, we add $r-1-\tau(a)$ new vertices
		$v^{(a)}_{1},v^{(a)}_{2},\ldots ,v^{(a)}_{r-1-\tau(a)}$ to the vertex set of ${\mathcal H}$ and also we add all new singleton edges $\left\{v^{(a)}_{1}\right\},\left\{v^{(a)}_{2}\right\},\ldots ,\left\{v^{(a)}_{r-1-\tau(a)}\right\}$ to the edge set of ${\mathcal H}$. Denote the resulting hypergraph by $\widetilde{{\mathcal H}}$. Now, extend the coloring $c: E({\mathcal H})\longrightarrow A$ to a mapping $\widetilde{c}: E \left(\widetilde{{\mathcal H}}\right)\longrightarrow A$ with the property that for each element $a$ in $A$ with $\tau(a) < r-1$ we have
		$$\widetilde{c}\left(\left\{v^{(a)}_{1}\right\}\right)=\widetilde{c}\left(\left\{v^{(a)}_{2}\right\}\right)=\cdots =\widetilde{c}\left(\left\{v^{(a)}_{r-1-\tau(a)}\right\}\right)=a.$$
		We claim that $\widetilde{c}$ is a proper vertex coloring of ${\rm KG}^r\left(\widetilde{{\mathcal H}}\right)$.
		Suppose the assertion of this claim is false. Then, there exist some $a$ in $A$ and an edge of ${\rm KG}^r\left(\widetilde{{\mathcal H}}\right)$, say
		$\{e_{1},e_{2},\ldots , e_{r}\}$, in such a way that $\widetilde{c}(e_{1})=\widetilde{c}(e_{2})=\cdots = \widetilde{c}(e_{r}) =a$ holds.
		Since $|\{e_{1},e_{2},\ldots , e_{r}\}\setminus E(\mathcal{H})|\leq r-1-\tau (a)$,
		we obtain the existence of $\tau (a) +1$ pairwise disjoint edges of $\mathcal{H}$ that are colored by $a$ under $c: E({\mathcal H})\longrightarrow A$, contradicting the fact that $c$ is an $(A,\tau)$-matching coloring of $\mathcal{H}$.
		
		From what has already been proved, it may be concluded that 
		$$|A| \geq \chi\left({\rm KG}^r\left(\widetilde{{\mathcal H}}\right)\right)\geq { {\rm ecd}^r\left(\widetilde{{\mathcal H}}\right)\over r-1};$$
		and consequently, it follows that 
		$$(r-1)|A| \geq {\rm ecd}^r\left(\widetilde{{\mathcal H}}\right).$$
		
		We recall that ${\rm ecd}^r\left(\widetilde{{\mathcal H}}\right)$ denotes the minimum number of vertices that must be
		removed from $\widetilde{{\mathcal H}}$ in order to obtain a remaining {\it equitably $r$-colorable} subhypergraph, that is, a hypergraph which admits a proper vertex coloring using $r$ colors, such that the sizes of the color classes differ by at most one.
		
		\noindent
		On one hand, if a hypergraph contains a singleton edge, it is never $r$-colorable. Therefore, in order to obtain an induced subhypergraph of $\widetilde{\mathcal{H}}$ that is equitably $r$-colorable, it is necessary to remove all vertices in $V(\widetilde{\mathcal{H}}) \setminus V(\mathcal{H})$. On the other hand, eliminating singleton edges alone
		may be insufficient; at least ${\rm ecd}^r(\mathcal{H})$ vertices from $\mathcal{H}$ must also be removed to acquire a remaining subhypergraph that is equitably $r$-colorable.
		Thus, 
		$${\rm ecd}^r\left(\widetilde{{\mathcal H}}\right) \geq \left|V\left(\widetilde{{\mathcal H}}\right)\right|-|V({\mathcal H})|+{\rm ecd}^r({\mathcal H}).$$
		
		Now, since 
		$(r-1)|A| = \left( \displaystyle\sum_{a\in A}\tau(a) \right) + \left|V\left(\widetilde{{\mathcal H}}\right)\right|-|V({\mathcal H})|$,
		the inequality
		$(r-1)|A| \geq {\rm ecd}^r\left(\widetilde{{\mathcal H}}\right)$
		implies
		$\displaystyle\sum_{a\in A}\tau(a) \geq  {\rm ecd}^r({\mathcal H})$;
		which is the desired conclusion.
	}
\end{proof}

Again, for the case where $E({\mathcal{H}}) \neq \varnothing$, one may rewrite the assertion of Theorem~\ref{mainthmSaeed2}
as follows:
$$\chi_M(\tau,{\mathcal H})\geq \min\left\{|A|:\   \varnothing \neq A \subseteq C, \mbox{ and } A \mbox{ is finite,}  \mbox{ and also } \sum_{a\in A}\tau(a) \geq  {\rm ecd}^r({\mathcal H})\right\}.$$


\maketitle
\section{{\bf Sharpness of The Lower Bounds}}

This section verifies the sharpness of the lower bounds given in Theorems \ref{mainthmSaeed} and \ref{mainthmSaeed2} for $\chi_M(\tau,{\mathcal H})$, and concludes the article by presenting the following proposition.

\begin{proposition}
	The lower bounds for $\chi_M(\tau,{\mathcal H})$ stated in Theorem \ref{mainthmSaeed} and Theorem \ref{mainthmSaeed2} are sharp.
\end{proposition}
\begin{proof}
	Let $n$, $k$, and $r$ be positive integers with $r\geq 2$ and
	$n\geq rk$. Also, let $\tau:\mathbb{N}\longrightarrow \{0,1,\ldots,r-1\}$
	be an $r$-color-frequency mapping such that $K_n^k$ is $\tau$-admissible and $r-1\in\tau(\mathbb{N})$.
	
	On account of Theorem \ref{mainthmSaeed}, the $\tau$-admissibility of $K_n^k$ implies
	$$\left\{|A|:\; A\subseteq \mathbb{N},\ A \mbox{ is finite, and } \sum_{a\in A}\tau(a) \geq  \left|V\left(K_n^k\right)\right|-{\rm alt}_r\left(K_n^k\right)\right\} \neq \varnothing ;$$
	and also,
	$$\chi_M\left(\tau,K_n^k\right) \geq \min \left\{|A|:\; A\subseteq \mathbb{N},\ A \mbox{ is finite, and } \sum_{a\in A}\tau(a) \geq  \left|V\left(K_n^k\right)\right|-{\rm alt}_r\left(K_n^k\right)\right\} .$$
	According to Theorem \ref{mainthmSaeed2}, the $\tau$-admissibility of $K_n^k$ indicates that
	$$\left\{|A|:\; A\subseteq \mathbb{N},\ A \mbox{ is finite, and } \sum_{a\in A}\tau(a) \geq  {\rm ecd}_r\left(K_n^k\right)\right\} \neq \varnothing ;$$
	and in addition,
	$$\chi_M\left(\tau,K_n^k\right) \geq \min \left\{|A|:\; A\subseteq \mathbb{N},\ A \mbox{ is finite, and } \sum_{a\in A}\tau(a) \geq  {\rm ecd}_r\left(K_n^k\right)\right\} .$$
	Now, since ${\rm alt}_r\left(K_n^k\right)= r(k-1)$ and ${\rm ecd}_r\left(K_n^k\right)= n - r(k-1)$,
	one finds that
	$$\left|V\left(K_n^k\right)\right|-{\rm alt}_r\left(K_n^k\right) = {\rm ecd}_r\left(K_n^k\right)= n - r(k-1) ;$$
	and therefore, we shall have established the Proposition if we show that
	$$\chi_M\left(\tau,K_n^k\right)\leq \min \left\{|A|:\; A\subseteq \mathbb{N},\ A \mbox{ is finite, and } \sum_{a\in A}\tau(a) \geq  n-r(k-1)\right\} .$$
	To do this, let us regard an arbitrary nonempty finite subset $A_0:=\{a_1,a_2,\ldots,a_t\}$ of $\mathbb{N}$ with the following four properties:
	\begin{itemize}
		\item $\displaystyle\sum_{a\in A_0}\tau(a) \geq  n-r(k-1) $; \\
		\item $\left| A_0 \right| = \min \left\{|A|:\; A\subseteq \mathbb{N},\ A \mbox{ is finite, and } \displaystyle\sum_{a\in A}\tau(a) \geq  n-r(k-1)\right\} $; \\
		\item $\tau(a_{1})\leq\tau(a_{2})\leq\cdots\leq\tau(a_{t})$; \\
		\item $\tau(a_{t})=r-1$ (Without loss of generality, the case 
		$\tau(a_{t})=r-1$
		may be considered. Otherwise, due to
		$r-1\in\tau(\mathbb{N})$, an element
		$b \in \mathbb{N}$ satisfying $\tau(b)=r-1$
		may be substituted for 
		$a_{t}$).
	\end{itemize}
	Now, let us consider $t$ pairwise disjoint sets $S_1, S_2, \ldots, S_t$ such that
	the following three conditions hold:
	\begin{itemize}
		\item $S_{1} \cup S_{2} \cup \dots \cup S_{t}=[n]$;
		\item For each $i$ in $\{1,2,\ldots ,t-1\}$ we have $|S_i|\leq \tau (a_i)$;
		\item $ |S_{t}| \leq \tau (a_t) + r(k-1) = r-1 + r(k-1)=rk-1$.
	\end{itemize}
	Also, let us define a mapping $c:\; E\left(K_n^k\right)\longrightarrow A_0$ such that each $e\in E\left(K_n^k\right)$ is mapped to $c(e):=a_{\varphi (e)}$ where
	$$ \varphi (e) := \min \left\{i:\; e\cap S_i\neq \varnothing \right\} . $$
	The mapping $c$ is an $\left(A_0 ,\tau \right)$-matching coloring of $K_n^k$; and therefore,
	$$\chi_M\left(\tau,K_n^k\right)\leq \left| A_0 \right| = \min \left\{|A|:\; A\subseteq \mathbb{N},\ A \mbox{ is finite, and } \sum_{a\in A}\tau(a) \geq  n-r(k-1)\right\} ;$$
	which is desired.
\end{proof}

\def\cprime{$'$} \def\cprime{$'$}

\bibliographystyle{plain}

\def\cprime{$'$} \def\cprime{$'$}

\end{document}